\newtheorem{theorem}{Theorem}[section]
\newtheorem{lemma}[theorem]{Lemma}
\newtheorem{proposition}[theorem]{Proposition}
\newtheorem{remark}[theorem]{Remark}
\renewcommand{\det}{{\rm det}\,}
\renewcommand{\O}{\Omega}
\newcommand{\md}{{\rm d}}
\renewcommand{\psi}{\mathfrak{Z}}
\newcommand{\Dcal}{\mathcal{D}}
\newcommand{\R}{{\mathbb R}} 
\newcommand{\N}{{\mathbb N}}
\newcommand{\be}{\begin{eqnarray}}
\newcommand{\ee}{\end{eqnarray}}
\newcommand{\Rz}{{\mathbb R}}
\newcommand{\haz}{\widehat}
\begin{document}
 
\title[ Existence results for incompressible magnetoelasticity]{
 Existence results for  incompressible
magnetoelasticity}
\author[M. Kru\v z\'\i k]{Martin Kru\v z\'\i k}
\author[U. Stefanelli]{Ulisse Stefanelli}
\author[J. Zeman]{Jan Zeman}

\address[M. Kru\v z\'\i k]{Institute of Information Theory and Automation, Academy of Sciences of the Czech Republic,
Pod vod\'{a}renskou v\v{e}\v{z}\'{\i}~4, CZ-182 08 Praha 8,
Czech Republic and
Faculty of Civil Engineering, Czech Technical University, Th\'{a}kurova 7, CZ--166~29 Praha~6, Czech Republic.}
\email{kruzik@utia.cas.cz}

\address[U. Stefanelli]{Faculty of Mathematics, University of Vienna, Oskar-Morgenstern-Platz 1, A-1090 Vienna, Austria.}
\email{ulisse.stefanelli@univie.ac.at}

\address[J. Zeman]{Faculty of Civil Engineering, Czech Technical University, Th\'{a}kurova 7, CZ--166~29 Praha~6, Czech Republic.}
\email{zemanj@cml.fsv.cvut.cz}

\date{\today}

\subjclass{}
\keywords{Magnetoelasticity, Magnetostrictive solids, Incompressibility, Existence of minimizers, Quasistatic evolution, Energetic solution}

\begin{abstract} 

 We investigate a variational theory for
magnetoelastic solids under the incompressibility constraint. The state of the system is
described by deformation and magnetization. While the former is
classically related to the reference configuration, magnetization
is defined in the deformed configuration instead. We
discuss the existence of energy minimizers without relying on higher-order
deformation gradient terms. Then, by
introducing a suitable positively
$1$-homogeneous dissipation, a quasistatic evolution model is
proposed and analyzed within the frame of energetic
solvability.

\end{abstract}

\maketitle


\section{Introduction}

 Magnetoelasticity describes the mechanical behavior of solids under
magnetic effects. The magnetoelastic coupling is  caused by rotations of
small magnetic domains  from their original random orientation in
the absence of  a magnetic field. The orientation of these small
domains by the imposition of the magnetic field induces a deformation of the specimen. As
the intensity of the magnetic field is increased, more and more magnetic domains orientate
themselves so that their principal axes of anisotropy are collinear with the
magnetic field in each region and finally saturation is reached. We refer to
e.g.~\cite{brown,desim,desimone-james,jam-kin} for  a discussion on the 
foundations of magnetoelasticity.

 The mathematical modeling  of magnetoelasticity is a vibrant area
of research,  triggered by the interest on so-called {\it multifunctional} 
materials. Among these one has to mention rare-earth alloys
such as TerFeNOL and GalFeNOL as well as ferromagnetic shape-memory
alloys as Ni$_2$MnGa, NiMnInCo,
NiFeGaCo, FePt, FePd,
among others.  All these materials exhibit so-called {\it giant}
magnetostrictive behaviors as reversible strains as large as 10\% can be activated
by the imposition of relatively moderate magnetic fields. This strong
magnetoelastic coupling makes them relevant in a wealth of innovative applications including
sensors and actuators.  

Following the modeling approach of {\sc James \&
Kinderlehrer} \cite{jam-kin2}, the state of a magnetostrictive material is described by its deformation $y:\Omega \to \Rz^3$ from the reference
configuration $\Omega \subset \Rz^3$ and by its magnetization
$m:\O^y \to \Rz^3$ which is defined on the deformed configuration
$\O^y:=y(\Omega)$ instead. This discrepancy, often neglected by
restricting to small deformation regimes, is
particularly motivated here by the possible
large deformations that a magnetostrictive materials can experience.

 We shall here be concerned with the
total energy $E$ defined as
\begin{equation}
E(y,m)=\int_\Omega W(\nabla y, m\circ y) +\alpha
\int_{\O^y}|\nabla m|^2 + \frac{\mu_0}{2}\int_{\Rz^3}|\nabla
u_m|^2.\label{functional}
\end{equation}
Here, $W$ stands for the elastic energy density, the second term is
the so-called {\it exchange} energy and $\alpha$ is related to the
typical size of ferromagnetic texture. The last term represents
magnetostatic energy, $\mu_0$ is the permittivity of
void, and $u_m$ is the magnetostatic potential generated by $m$.
In particular, $u_m$ is a solution to the Maxwell equation
\begin{equation}
  \label{maxwell}
  \nabla \cdot (-\mu_0 \nabla u_m + \chi_{\O^y}m)=0 \ \ \text{in} \ \ \Rz^3,
\end{equation}
where $\chi_{\O^y}$ is the characteristic function of the deformed
configuration $\O^y$. We shall consider $E$ under
the a.e. constraints
\begin{equation}
  \label{constraints}
  \det \nabla y=1, \ \ |m|=1,
\end{equation}
which correspond to incompressibility and magnetic saturation (here
properly rescaled). Note that incompressibility is reputed to be a
plausible assumption in a vast majority of application \cite{desimone-james}.
 
The aim of this paper is twofold. At first, we concentrate on the static
problem. By assuming that $W$ is polyconvex and $p$-coercive in $\nabla y$ for
$p>3$ we check that $E$ admits a minimizer. This result is to be compared with
the discussion in {\sc Rybka \& Luskin} \cite{rybka-luskin} where weaker growth
assumptions on $W$ but a second-order deformation gradient is included. On the
contrary, no higher order gradient is here considered and we make full use of
the incompressibility constraint. In this direction, we shall mention also the
PhD thesis by {\sc Liakhova} \cite{Liakhova}, where the the dimension reduction
problem to thin films under the a-priori constraint $0<\alpha < \det
\nabla y <\beta$ is considered. This perspective has been numerically
investigated by {\sc Liakhova,  Luskin, \& Zhang} \cite{Luskin06,Luskin07}. More
recently, the incompressibility case has been addressed by a penalization method
from the slightly compressible case by {\sc Bielsky \& Gambin} \cite{Bielski10},
still by including a second-order deformation gradient term. We also  mention
the two-dimensional analysis by {\sc DeSimone \& Dolzmann} \cite{DeSimone} where
no gradients are considered and the existence of a zero energy state is checked
by means of convex integration techniques. Our discussion on the static problem
is reported in Section \ref{sec:energy}. Finally, let us point out that a closely related  static model on nematic elastomers  was recently analyzed  by {\sc Barchiesi}
\& {\sc DeSimone} in \cite{barchiesi-desimone}.  

A second focus of the paper is that of proposing a quasi-static evolution
extension of the static model. This is done by employing a
dissipation distance between magnetoelastic states which combines magnetic
changes with the actual deformation of the specimen. Note that the 
rate-independence  of this evolution seems well motivated for  fairly
wide range of frequencies of external magnetic fields.  We also ensure that the
elastic deformation is one-to-one at least inside the reference configuration
allowing for possible frictionless self-contact on the boundary. Let us mention
that  some models of rate-independent magnetostrictive
effects  were developed in \cite{bks,bs} in the framework 
magnetic shape-memory alloys  and  in
\cite{roubicek-kruzik,roubicek-kruzik1} for bulk ferromagnets. 
 
 We tackle the problem of ensuring the existence of quasi-static evolutions
under  frame of energetic solvability of rate-independent problems \`a la {\sc
  Mielke} \cite{mielke-theil-2,mielke-theil-levitas}. We restrict
ourselves to the isothermal situation. In particular  we assume  that
the process is sufficiently slow and/or the body thin in at least one direction
 so that the released heat can be  considered to be immediately
transferred to  the environment.  By relying on the classical
energetic-solution technology \cite{Mielke05} we prove that the implicit
incremental time discretization of the problem admits a time-continuous
quasi-static evolution limit. Details are given in Section
\ref{sec:dissipation}.

\section{ Energy} \label{sec:energy}

 Let the reference configuration $\O\subset\R^3$  be a bounded
Lipschitz domain.   Let us assume from the very beginning
$$p>3$$ 
and consider deformations
 $y\in W^{1,p}(\O;\R^3) \subset C(\overline \O;\Rz^3)$ 
where the bar denotes set closure. We impose
homogeneous boundary conditions by prescribing  that  $y=0$
 on $\Gamma_0\subset\partial\O$
 where $\Gamma_0$  has  a positive surface measure. Magnetization, representing  the   density
of magnetic spin moments,  is assumed to be defined on the open
set  $\O^y := y(\overline \O)\setminus y (\partial \overline \Omega)$
 and to have a fixed norm $1$ (note that our problem is
isothermal),  namely, $m:\O^y\to S^2$.  


 The
incompressibility constraint  reads  $\det\nabla y=1$ almost
everywhere in $\O$.  In particular, this entails invertibility of
$y$ through  the Ciarlet-Ne\v{c}as condition \cite{ciarlet-necas} 
  which in our situation reads $|\O^y|=|\O|$.  Indeed, we have
  that 
$$ |\O^y| = \int_{\O^y}1 = \int_\Omega \det \nabla y = |\Omega|.$$

 We shall define the sets  
 \begin{align*}
y\in\mathbb{Y}&:=\{y\in W^{1,p}(\O;\R^3)\ |\ \det\nabla y=1\
\text{in}\ \Omega, y= 0
\mbox{ on }\Gamma_0,\ |\O^y|=|\O|\}\\
 m\in\mathbb{M}^y&:=\{m\in W^{1,2}(\O^y;\R^3);\ |m|=1\
\text{in}\ \Omega\}.
\end{align*}
 Note that, as $p>3$, the set $\mathbb{Y}$ is sequentially closed
with respect to the weak topology of $W^{1,p}(\O;\R^3)$. This indeed
 follows from the sequential continuity of the map
$y\mapsto\det\nabla y$ from $W^{1,p}(\O;\R^3) $ to $L^{p/3}(\O)$ (both
equipped with the weak convergence),  the weak closedness of the
Ciarlet-Ne\v{c}as condition \cite{ciarlet,ciarlet-necas}, and from the
compactness properties of the trace operator. 

 For the sake of brevity, we shall also define the set
$\mathbb{Q}$ as 
 $$\mathbb{Q}:=\{(y,m) \, | \,  (y,m)\in \mathbb{Y}\times \mathbb{M}^y\}\ .$$
  Moreover, we  say  that $\{(y_k,m_k)\}_{k\in\N}$ 
 $\mathbb{Q}$-converges  to $(y,m)\in\mathbb{Q}$ as $k\to\infty$  if the following three conditions hold
 \begin{subequations}\label{convergence}
 \begin{align}\label{convergence-y}
& y_k\rightharpoonup y \mbox{  in } W^{1,p}(\O;\R^3) ,\\
\label{convergence-m}
&\chi_{\O^{y_k}}m_k\to \chi_{\O^{y}}m \mbox{  in } L^2(\R^3;\R^3)  ,\\
\label{convergence-grad}
& \chi_{\O^{y_k}}\nabla m_k\rightharpoonup \chi_{\O^{y}}\nabla m
\mbox{  in } L^2(\R^3;\R^{3\times 3}) .
 \end{align}
 \end{subequations}
 Eventually, we say that a sequence 
$\{(y_k,m_k)\}_{k\in\N}\subset\mathbb{Q}$  is
$\mathbb{Q}$-bounded   if 
 $$\sup_{k\in\N}(\|y_k\|_{W^{1,p}(\O;\R^3)}+  \|\nabla
 m_k\|_{L^2(\O^{y_k};\R^{3\times 3})})< \infty . $$ 
 
  By following an argument from \cite[Lemma 3.5]{rybka-luskin},
 here simplified by the incompressibility assumption, we can show  that  $\mathbb{Q}$-bounded
  sequences  are $\mathbb{Q}$-sequentially-precompact. 
 
 \begin{proposition}\label{q-comp}
  Every
 $\mathbb{Q}$-bounded sequence admits a $\mathbb{Q}$-converging subsequence. 
 \end{proposition}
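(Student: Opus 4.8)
The plan is to extract a subsequence satisfying all three conditions in the definition of $\mathbb{Q}$-convergence, then verify that the limit actually lies in $\mathbb{Q}$. I would start from a $\mathbb{Q}$-bounded sequence $\{(y_k,m_k)\}$, so that $\|y_k\|_{W^{1,p}}$ and $\|\nabla m_k\|_{L^2(\O^{y_k})}$ are uniformly bounded. Since $p>3$, the embedding $W^{1,p}(\O;\R^3)\subset C(\overline\O;\R^3)$ is compact, so after passing to a subsequence I obtain $y_k\rightharpoonup y$ in $W^{1,p}(\O;\R^3)$ and $y_k\to y$ uniformly. This immediately gives \reff{convergence-y}. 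Because $\mathbb{Y}$ is sequentially weakly closed in $W^{1,p}$ (as recalled in the excerpt, using the weak continuity of $y\mapsto\det\nabla y$ and the weak closedness of the Ciarlet--Ne\v cas condition), the limit $y$ belongs to $\mathbb{Y}$, so in particular $\det\nabla y=1$ and $|\O^y|=|\O|$.

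Next I would handle the magnetization. The natural device is to regard $\chi_{\O^{y_k}}m_k$ and $\chi_{\O^{y_k}}\nabla m_k$ as functions on all of $\R^3$, extended by zero outside $\O^{y_k}$. Since $|m_k|=1$, the functions $\chi_{\O^{y_k}}m_k$ are bounded in $L^2(\R^3;\R^3)$ (using $|\O^{y_k}|=|\O|$), and $\chi_{\O^{y_k}}\nabla m_k$ are bounded in $L^2(\R^3;\R^{3\times 3})$ by $\mathbb{Q}$-boundedness. Hence, up to a further subsequence, there are weak limits $v\rightharpoonup$ and $G\rightharpoonup$ in the respective $L^2$ spaces. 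This already produces a candidate for \reff{convergence-grad}. The crux is to identify these weak limits with $\chi_{\O^y}m$ and $\chi_{\O^y}\nabla m$ for a genuine $m\in\mathbb{M}^y$, and to upgrade the convergence in \reff{convergence-m} from weak to \emph{strong} in $L^2(\R^3;\R^3)$.

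The main obstacle, and the place where the argument of \cite[Lemma 3.5]{rybka-luskin} is invoked and simplified by incompressibility, is controlling the moving domains $\O^{y_k}$. The uniform convergence $y_k\to y$ should be leveraged to show that $\chi_{\O^{y_k}}\to\chi_{\O^y}$ in $L^1(\R^3)$ (or at least that the symmetric differences $|\O^{y_k}\triangle\O^y|\to 0$); here the equalities $|\O^{y_k}|=|\O^y|=|\O|$ prevent loss of mass and rule out the concentration/leakage pathologies that force higher-order gradient terms in the compressible setting. Given this convergence of the characteristic functions, I would define $m$ on $\O^y$ by restricting the weak limit $v$, establish $|m|=1$ a.e. by passing to the limit in the saturation constraint (combining the convergence of $\chi_{\O^{y_k}}$ with the weak limit and an argument that pins the modulus, typically via lower semicontinuity together with the fixed-mass constraint to force strong convergence), and identify $G=\chi_{\O^y}\nabla m$ by testing against smooth vector fields and passing to the limit in the integration-by-parts identity on $\R^3$, using the convergence of the domains to handle the boundary contributions. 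Once $m\in W^{1,2}(\O^y;\R^3)$ with $|m|=1$ is produced, we have $m\in\mathbb{M}^y$, the limit pair $(y,m)$ lies in $\mathbb{Q}$, and all three conditions \reff{convergence-y}--\reff{convergence-grad} hold along the extracted subsequence, completing the proof.
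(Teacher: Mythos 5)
Your outline reproduces the paper's framework for the deformations and for the convergence of the domains (uniform convergence of $y_k$ plus $|\O^{y_k}|=|\O^y|=|\O|$ gives $|\O^{y_k}\triangle\O^y|\to 0$; this matches the inner/outer approximation the paper uses), but it has a genuine gap at the central step: upgrading weak to strong convergence in \reff{convergence-m} and passing the constraint $|m|=1$ to the limit. Your proposed mechanism --- ``lower semicontinuity together with the fixed-mass constraint to force strong convergence'' --- does not work and is in fact circular. Weak $L^2$ limits of unit vector fields need not have unit modulus: on a fixed domain, $m_k(x)=(\cos(kx_1),\sin(kx_1),0)$ satisfies $|m_k|=1$ and $\|\chi m_k\|_{L^2}^2=|\O|$ for every $k$, yet $\chi m_k\rightharpoonup 0$. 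Lower semicontinuity of the norm only yields $\|v\|_{L^2}^2\le\liminf\|\chi_{\O^{y_k}}m_k\|_{L^2}^2=|\O|$, an inequality in the useless direction; equality (which, combined with weak convergence, would indeed give strong convergence in the Hilbert space $L^2$) is precisely what needs to be proved and cannot be extracted from the saturation constraint on the approximating sequence. In short, nothing in your argument excludes oscillation of $m_k$, and oscillation is exactly what kills both the strong convergence and the identity $|m|=1$.

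What rules out oscillation is the gradient bound in the definition of $\mathbb{Q}$-boundedness, and it must be deployed through a compactness argument on \emph{fixed} domains, which is what the paper does: for $\varepsilon>0$ consider the inner sets $\O^y_\varepsilon\subset\subset\O^y$; by the uniform convergence $y_k\to y$ one has $\O^y_\varepsilon\subset\O^{y_k}$ for $k$ large, so $\{m_k\}$ is bounded in $W^{1,2}(\O^y_\varepsilon;\R^3)$ and, by Rellich's compact embedding, converges (up to a subsequence, then diagonalized in $\varepsilon$) \emph{strongly} in $L^2(\O^y_\varepsilon;\R^3)$. Strong local convergence both preserves $|m|=1$ a.e.\ and supplies the middle term in the three-term decomposition
\begin{equation*}
\|\chi_{\O^{y_k}}m_k-\chi_{\O^y}m\|_{L^2}\le\|(\chi_{\O^{y_k}}-\chi_{\O^y_\varepsilon})m_k\|_{L^2}+\|\chi_{\O^y_\varepsilon}(m_k-m)\|_{L^2}+\|(\chi_{\O^y_\varepsilon}-\chi_{\O^y})m\|_{L^2},
\end{equation*}
whose outer terms are controlled by the measure estimates you already have. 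You possess all the ingredients (the gradient bound, the domain convergence), but your proof never actually applies the compact embedding on fixed interior sets, and the substitute argument you offer in its place is invalid; as written, the proposal does not close.
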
  
 
\begin{proof}  Let $(y_k,m_k)$ be $\mathbb{Q}$-bounded. 
 The  compactness  in the $y$-component, i.e. \eqref{convergence-y},
 follows from  the weak closure of $\mathbb{Y}$. 
  
 Assume (without relabeling the subsequence)  that $y_k\rightharpoonup y
 $ in $W^{1,p}(\O;\R^3)$ and fix $\varepsilon>0$.  We denote by $\O^y$ the
 set  $\O^y_\varepsilon:=\{z\in\O^y;\, {\rm
   dist}(z,\partial\O)>\varepsilon\}$. As $p>3$ we have that
 $W^{1,p}(\O;\R^3)\hookrightarrow C(\bar\O;\R^3)$ compactly.  This in
 particular entails  that $\O^y_\varepsilon\subset\O^{y_k}$  for $k$
 sufficiently large. Hence, we infer that
 
 \begin{align*}
  \int_{\O^y_\varepsilon}|\nabla m_k| \le \int_{\O^{y_k}} |\nabla m_k|  <\infty\ .\end{align*}
 Taking into account that $|m_k| =1$ we get ( again  for a non-relabeled subsequence) that 
 $m_k\rightharpoonup m$ in $W^{1,2}(\O^y_\varepsilon;\R^3)$. Here the
 extracted  subsequence and  its limit  $m$ could depend on
 $\varepsilon$.  On the other hand,  as
 $\{\O^y_\varepsilon\}_{\varepsilon>0}$ exhausts $\O^y$ we  have  that 
 $m$  is defined  almost everywhere in $\O^y$.  By
 following the argument in 
 \cite[Lemma 3.5]{rybka-luskin} we  exploit the decomposition 
 \begin{align} 
   \|\chi_{\O^{y_k}}m_k-\chi_{\O^y}m\|_{L^2(\R^3;\R^3)}&\le\|(\chi_{\O^{y_k}}-\chi_{\O^y_\varepsilon})m_k\|_{L^2(\R^3;\R^3)}+\|\chi_{\O^y_\varepsilon}(m_k-m)\|_{L^2(\R^3;\R^3)}\nonumber\\
&+\|(\chi_{\O^y_\varepsilon}-\chi_{\O^y})m\|_{L^2(\R^3;\R^3)}\label{miserve}
.\end{align}
 We now check that  the above right-hand side goes to $0$ as $k\to\infty$ and
$\varepsilon\to 0$.  As to the first term,  since  $\overline{\O^y}$ is compact
  we have that   for any $\varepsilon>0$   there exists
  an open set $O_\varepsilon$ such that
 $O_\varepsilon\supset\overline{\O^y}$ and
 $|O_\varepsilon\setminus\O^y|<\varepsilon$. The uniform convergence
  $y_k\to y$  yields that $\O^{y_k}\subset O_\varepsilon$ for
 $k$ sufficiently large.  Therefore,
 $|O_\varepsilon\setminus\O^y_\varepsilon|$ can be made arbitrarily
 small if $\varepsilon$ is taken small enough, and the
 first term in  the right-hand side of  \eqref{miserve}
 converges to  $0$ as  $k\to\infty$ and
$\varepsilon\to 0$. The second term in the right-hand side of \eqref{miserve} goes to
$0$ with $k \to \infty$ as $m_k \to m$ strongly in $L^2(\O^y_\varepsilon;\R^3)$.
 As $|m|=1$ almost everywhere, the third term in the right-hand side of \eqref{miserve} is
bounded by $\| \chi_{\O^{y}} - \chi_{\O^y_\varepsilon}\|_{L^2(\R^3;\R^3)}$
which goes to $0$ as  
$\varepsilon\to 0$. 
 This shows  the convergence  \eqref{convergence-m}.

  A similar argument can then  be used to show that  
 \begin{equation*}
 \chi_{\O^{y_k}}\nabla m_k \rightharpoonup  \chi_{\O^y}\nabla m \mbox{ in } L^2(\R^3;\R^{3\times 3})\ ,
 \end{equation*}
 namely convergence  \eqref{convergence-grad}.
\end{proof}

 \begin{remark}\label{composition}\rm 
 Notice that the proof of the strong convergence of
 $\{\chi_{\O^{y_k}}m_k\}$  still holds  if we replace $\O$
 by  some  arbitrary measurable subset $\omega\subset\O$. 
Keeping in mind that $\det\nabla y_k=\det\nabla y=1$ almost everywhere
in $\O$,  for all $k\in\N$,  and that all mappings $y_k$ and $y$  are invertible,  we calculate
 $$
\int_\omega m_k \circ y_k=\int_{\R^3}\chi_{y_k(\omega)} m_k \to
\int_{\R^3}\chi_{y(\omega)} m =\int_\omega m \circ y
.$$ 
This shows  $m_k \circ y_k \rightharpoonup m \circ y$   in
$L^2(\O;\R^3)$.  As the $L^2$ norms converge as well, we get
strong convergence in $L^2(\O;\R^3)$.  Eventually, as $m_k$ takes
values in $S^2$ one has that  $m_k \circ y_k \rightharpoonup m \circ y$   in
$L^r(\O;\R^3)$ for all $r <\infty$ as well. 
\end{remark}

The following result is an  immediate  consequence of the linearity of 
the Maxwell equation  \eqref{maxwell}.
\bigskip
\begin{lemma}\label{maxwell-limit}
Let $\chi_{\O^{y_k}}m_k\to \chi_{\O^{y}}m$ in $L^2(\R^3;\R^3)$ and let
$u_{m_k}\in W^{1,2}(\R^3)$ be the solution of \eqref{maxwell}
corresponding to  $\chi_{\O^{y_k}}m_k$. Then $u_{m_k}\rightharpoonup
u_m$ in $W^{1,2}(\R^3)$ where $u_m$ is the solution of \eqref{maxwell}
 corresponding to  $\chi_{\O^{y}}m$.
\end{lemma}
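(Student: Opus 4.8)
The plan is to run the standard linear-elliptic stability argument: derive a uniform a priori bound from the energy estimate, extract a weakly convergent subsequence, pass to the limit in the weak formulation exploiting the strong convergence of the source term, and finally invoke uniqueness to upgrade subsequential convergence to convergence of the full sequence. First I would record the weak form of \eqref{maxwell}: the potential $u_{m_k}$ is characterized by
$$\mu_0 \int_{\R^3} \nabla u_{m_k}\cdot \nabla\varphi = \int_{\R^3}\chi_{\O^{y_k}}m_k\cdot\nabla\varphi$$
for every test function $\varphi$. The natural space here is the homogeneous Sobolev space of functions with gradient in $L^2(\R^3;\R^3)$, on which the bilinear form $\mu_0\int\nabla u\cdot\nabla v$ is bounded and coercive; existence and uniqueness of $u_{m_k}$ (up to an additive constant, normalized away) then follow from Lax--Milgram, and likewise for the limiting problem with source $\chi_{\O^y}m$.

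Next I would test with $\varphi=u_{m_k}$ and apply Cauchy--Schwarz to obtain the a priori estimate
$$\mu_0\,\|\nabla u_{m_k}\|_{L^2(\R^3;\R^{3})}\le \|\chi_{\O^{y_k}}m_k\|_{L^2(\R^3;\R^3)}.$$
Since $\chi_{\O^{y_k}}m_k$ converges in $L^2(\R^3;\R^3)$ it is in particular bounded, so $\{\nabla u_{m_k}\}$ is bounded in $L^2(\R^3;\R^3)$. I would then extract a (non-relabeled) subsequence along which $\nabla u_{m_k}\rightharpoonup v$ weakly in $L^2(\R^3;\R^3)$; because the subspace of gradients is weakly closed, $v=\nabla u_*$ for some $u_*$ in the homogeneous space, equivalently $u_{m_k}\rightharpoonup u_*$ in $L^6(\R^3)$ via the Sobolev embedding. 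Passing to the limit in the weak form along this subsequence is now routine: on the left-hand side the weak convergence $\nabla u_{m_k}\rightharpoonup v$ tested against the fixed $\nabla\varphi\in L^2$ yields $\mu_0\int_{\R^3} v\cdot\nabla\varphi$, while on the right-hand side the strong convergence $\chi_{\O^{y_k}}m_k\to\chi_{\O^y}m$ against the fixed $\nabla\varphi$ yields $\int_{\R^3}\chi_{\O^y}m\cdot\nabla\varphi$. Hence $u_*$ solves the weak form of \eqref{maxwell} with source $\chi_{\O^y}m$, so by uniqueness $u_*=u_m$. As the limit is independent of the chosen subsequence, a standard subsequence argument delivers the stated convergence for the whole sequence.

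I do not expect any genuine obstacle, and indeed the statement already announces that the result is immediate from the linearity of \eqref{maxwell}: linearity is exactly what furnishes both the energy estimate and the possibility of passing to the limit term by term. The only point deserving a little care is the choice of function space on the unbounded domain $\R^3$. Since the potential is determined only up to a constant and does not in general belong to $L^2(\R^3)$, the convergence $u_{m_k}\rightharpoonup u_m$ should be read at the level of gradients weakly in $L^2(\R^3;\R^3)$ (together with $u_{m_k}$ weakly in $L^6(\R^3)$ by Sobolev), rather than literally in $W^{1,2}(\R^3)$.
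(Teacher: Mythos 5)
Your proof is correct, but it is more roundabout than the one-line argument the paper has in mind when it calls the lemma an ``immediate consequence of the linearity'' of \eqref{maxwell}. The direct route is: by linearity, the difference $w_k := u_{m_k}-u_m$ solves \eqref{maxwell} with source $\chi_{\O^{y_k}}m_k-\chi_{\O^{y}}m$; testing this equation with $w_k$ itself and using Cauchy--Schwarz (exactly your a priori estimate, applied to the difference rather than to $u_{m_k}$) gives
\begin{equation*}
\mu_0\,\|\nabla u_{m_k}-\nabla u_m\|_{L^2(\R^3;\R^3)}\le \|\chi_{\O^{y_k}}m_k-\chi_{\O^{y}}m\|_{L^2(\R^3;\R^3)}\to 0,
\end{equation*}
so the gradients converge \emph{strongly}, with no subsequence extraction, no weak-closedness of the space of gradients, and no appeal to uniqueness of the limit. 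Your compactness-plus-uniqueness template is the right general-purpose tool (it is what one would do for a nonlinear or non-monotone problem, where the equation for the difference is unavailable), but here it both costs more work and delivers less: only weak convergence of $\nabla u_{m_k}$, whereas linearity gives norm convergence for free. Your closing remark on the function space is a fair and worthwhile observation: on all of $\R^3$ the potential is naturally sought in the homogeneous space $\{u\in L^6(\R^3):\nabla u\in L^2\}$, and the convergence claim is really about gradients, which is also all that is used later for the magnetostatic energy term. (In the paper's setting one can in fact recover the literal $W^{1,2}(\R^3)$ statement, since the sources $\chi_{\O^{y_k}}m_k$ are uniformly bounded with supports in a fixed compact set, so the potentials decay like $|x|^{-2}$ and are uniformly in $L^2(\R^3)$; but this refinement is not needed.)
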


 Let us finally enlist here our assumptions on the elastic
energy density $W$.

 \begin{subequations}\label{assumptions}
 \begin{align}
&  \exists c>0\, \forall F,m\,: -1/c+c|F|^p \le W(F,m),\label{growth}\\
&\forall R\in{\rm SO}(3)\,: W(RF,Rm)=W(F,m), \label{frame}\\
\label{even}
&\forall F,m\,: W(F,m)= W(F, \pm m),\\
&\label{polyconvexity}\forall F,m\,: W(F,m)=\haz W(F,{\rm cof}\, F, m),
\end{align}
 \end{subequations}
 where $\haz W:\R^{3\times 3}\times\R^{3\times 3}\times\R^3\to \R$ is  a continuous function  such that
 $\haz W(\cdot,\cdot,m)$ is convex for every $m\in  S^2$.  In
 particular, we assume material frame indifference \eqref{frame} and
 invariance under magnetic parity \eqref{even}.
 Recall that for $F \in \Rz^{3\times 3}$invertible one has  cof$\, F$ is defined as
 cof$\, F:=(\det F) F^{-\top}$. In the present incompressible case
 $\det F =1$ we
 simply have cof$\, F:=  F^{-\top}$. Eventually,  assumption \eqref{polyconvexity}   corresponds to the
 polyconvexity of the function  $W(\cdot,m)$
 \cite{ball77}.   Assumptions \eqref{assumptions} will be considered in
 all of the following, without explicit mention.

 \begin{theorem}[ Existence of minimizers]\label{existence-static}
 The energy $E$  
is lower semicontinuous  and coercive  with respect to  $\mathbb{Q}$-convergence.
 In particular, it attains  a  minimum on~$\mathbb{Q}$. 
\end{theorem}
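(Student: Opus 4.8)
The plan is to run the direct method of the calculus of variations on $\mathbb{Q}$, using Proposition~\ref{q-comp} as the compactness input and establishing coercivity and $\mathbb{Q}$-lower semicontinuity of $E$ as the two separate ingredients. I tacitly assume $\mathbb{Q}\neq\emptyset$, so that $\inf_{\mathbb{Q}}E<+\infty$: on any admissible state $E$ is finite because $\haz W$ is continuous, $m\in W^{1,2}(\O^y;\R^3)$, and the magnetostatic term is controlled by \eqref{maxwell}.

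\emph{Coercivity.} Given a sequence with $\sup_k E(y_k,m_k)<\infty$, I drop the two nonnegative terms (exchange and magnetostatic) and use the growth bound \eqref{growth} to get
\[
E(y_k,m_k)\ge -\frac{|\O|}{c}+c\,\|\nabla y_k\|_{L^p(\O;\R^{3\times 3})}^p+\alpha\,\|\nabla m_k\|_{L^2(\O^{y_k};\R^{3\times 3})}^2 .
\]
Since $y_k=0$ on $\Gamma_0$, which has positive surface measure, the Poincar\'e--Friedrichs inequality bounds $\|y_k\|_{W^{1,p}(\O;\R^3)}$ by $\|\nabla y_k\|_{L^p(\O;\R^{3\times 3})}$. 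Hence a uniform energy bound forces $\mathbb{Q}$-boundedness, and Proposition~\ref{q-comp} extracts a $\mathbb{Q}$-converging subsequence: this is precisely coercivity with respect to $\mathbb{Q}$-convergence.

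\emph{Lower semicontinuity.} I fix $(y_k,m_k)\to(y,m)$ in $\mathbb{Q}$ and treat the three terms. For the magnetostatic energy, Lemma~\ref{maxwell-limit} gives $u_{m_k}\rightharpoonup u_m$ in $W^{1,2}(\R^3)$, so weak lower semicontinuity of the $L^2$-norm yields $\int_{\R^3}|\nabla u_m|^2\le\liminf_k\int_{\R^3}|\nabla u_{m_k}|^2$. For the exchange term I use \eqref{convergence-grad}, $\chi_{\O^{y_k}}\nabla m_k\rightharpoonup\chi_{\O^y}\nabla m$ in $L^2(\R^3;\R^{3\times 3})$, and again weak lower semicontinuity of the norm gives $\int_{\O^y}|\nabla m|^2\le\liminf_k\int_{\O^{y_k}}|\nabla m_k|^2$.

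\emph{The elastic term is the crux.} Via \eqref{polyconvexity} the density is $\haz W(\nabla y,{\rm cof}\,\nabla y,m\circ y)$ with $\haz W(\cdot,\cdot,m)$ continuous and convex. I have three convergences: $\nabla y_k\rightharpoonup\nabla y$ in $L^p$ from \eqref{convergence-y}; the weak continuity of minors, which gives ${\rm cof}\,\nabla y_k\rightharpoonup{\rm cof}\,\nabla y$ in $L^{p/2}$ (here $p>3$ makes $p/2>1$); and the strong convergence $m_k\circ y_k\to m\circ y$ in every $L^r(\O;\R^3)$ from Remark~\ref{composition}. The obstacle is the coupling: the magnetization enters composed with the deformation and converges only strongly, while the two polyconvex arguments converge only weakly, so Ball-type lower semicontinuity must be upgraded to a continuous dependence on a strongly converging parameter. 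I would handle this by the supporting-affine representation, writing $\haz W(F,G,m)=\sup_j\big(A_j(m)\!\cdot\!(F,G)+b_j(m)\big)$ with coefficients depending continuously on $m$; for each $j$, since $m_k\circ y_k$ takes values in the compact set $S^2$, the maps $A_j(m_k\circ y_k)$ and $b_j(m_k\circ y_k)$ are bounded and converge strongly, so the product with the weakly converging pair $(\nabla y_k,{\rm cof}\,\nabla y_k)$ passes to the limit with equality, and taking the supremum over $j$ through the integral closes the estimate. Crucially, because affine minorants pass to the limit exactly, no global lower bound on $\haz W$ is needed, only \eqref{growth} along the sequence. The delicate point is the measurable/continuous selection of these affine minorants in the parameter $m$, which is standard for convex normal integrands and is in the spirit of \cite{ball77,barchiesi-desimone} and \cite[Lemma 3.5]{rybka-luskin}. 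Summing the three contributions gives $E(y,m)\le\liminf_k E(y_k,m_k)$. To conclude, I take a minimizing sequence, use coercivity to make it $\mathbb{Q}$-bounded, apply Proposition~\ref{q-comp} to obtain a $\mathbb{Q}$-converging subsequence with limit $(y,m)\in\mathbb{Q}$, and invoke lower semicontinuity to identify $(y,m)$ as a minimizer.
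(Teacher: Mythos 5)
Your proposal is correct and structurally identical to the paper's proof: both run the direct method with the same decomposition, namely coercivity via \eqref{growth} (made rigorous, as you do, by Poincar\'e's inequality using $y=0$ on $\Gamma_0$) giving $\mathbb{Q}$-bounded sublevels, Proposition~\ref{q-comp} for compactness, weak lower semicontinuity of the quadratic exchange term, and Lemma~\ref{maxwell-limit} for the magnetostatic term. The only divergence is the coupled elastic term: the paper dispatches it in one line by citing \cite{ball77, eisen} --- Ball for the weak continuity of ${\rm cof}\,\nabla y_k$ in $L^{p/2}$ and Eisen's selection lemma for lower semicontinuity of integrands convex in the weakly converging arguments with a lower-order argument converging a.e.\ (supplied here, as you correctly identify, by Remark~\ref{composition}) --- whereas you re-derive this result from scratch via a countable supremum of affine functions of $(F,{\rm cof}\, F)$ with coefficients depending continuously on $m\in S^2$. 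That is a legitimate, classical De Giorgi-type route, it uses exactly the same three convergences, and it has the merit of making explicit where each hypothesis enters. One caveat: your phrase ``taking the supremum over $j$ through the integral closes the estimate'' is not literally valid, since $\sup_j\int_\O \le \int_\O\sup_j$ goes in the wrong direction for your purposes; the standard fix is to localize: for fixed $J$, partition $\O$ into the measurable sets on which each of the first $J$ affine minorants (evaluated at the limit) attains the maximum, use that each minorant bounds $\haz W$ from below along the whole sequence to pass to the limit in $k$ piecewise (products of strongly and weakly converging sequences), and then let $J\to\infty$ by monotone convergence, which is justified by the uniform lower bound in \eqref{growth}. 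With that routine completion your argument is a self-contained proof of exactly the statement the paper obtains by citation.
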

 
 \begin{proof}
 Owing to the coercivity assumption \eqref{growth}, one
immediately gets that $E$ sublevels are 
$\mathbb{Q}$-bounded, hence $\mathbb{Q}$-sequentially compact due to
Proposition \ref{q-comp}. 
 
 The  magnetoelastic  term  in $E$ is weakly lower semicontinuous because of the
 assumptions \eqref{assumptions} on $W$, see  \cite{ball77,
   eisen}.
The  exchange energy term in $E$ is quadratic hence weakly lower
 semicontinuous. The magnetostatic term is  weakly lower
 semicontinuous by Lemma \ref{maxwell-limit}. 
The
 existence of a minimizer follows  from the direct
 method, e.g.~\cite{dacorogna}.
 
\end{proof}

 For the sake of notational simplicity in all of this section no
external forcing acting on the system was considered. It is however worth mentioning explicitly that the analysis extends
immediately to the case of the linear perturbation of the energy $E$
given by including the term
$$ - \Bigl(\int_{\O^y}\!\!\! h \cdot m + \int_\O f\cdot u +
\int_{\Gamma_t}g \cdot u\Bigr).$$
The first term is the so-called {\sc Zeeman} energy and $h\in
L^1(\O^y;\Rz^3)$ represents an external magnetic field. Moreover, $f\in L^q(\O;\Rz^3)$
is a body force, and $g \in L^q(\Gamma_t;\Rz^3)$ is a traction acting
on $\Gamma_t$ where $\Gamma_t\subset \partial \Omega$ is
relatively open, $\partial \Gamma_0 = \partial \Gamma_t$ (this last
two boundaries taken in $\partial \Omega$), and $1/p+1/q=1$.

Eventually, we could replace the homogeneous Dirichlet boundary
condition $y=0$ on $\Gamma_0$ with some suitable non-homogeneous
condition without difficulties.

\section{ Evolution}\label{sec:dissipation}
 Let us now turn to the analysis of quasi-static evolution driven by $E$. In
order to do so, one has to discuss dissipative effect as well. Indeed, under
 usual loading regimes , magnetically hard materials, 
experience  dissipation. On the other hand,
the  dissipation mechanism in ferromagnets can be influenced by impurities in the
material without affecting substantially the stored energy.   This allows us
to consider energy storage and dissipation as independent mechanisms. 

Our, to some extent simplified, standpoint is that the amount of
dissipated energy within the phase transformation from one pole to the other can be
described by a single, phenomenologically given number (of the
dimension J/m$^3$=Pa) depending on the coercive force $H_{\rm c}$ \cite{chikazumi}.
 Being interested in quasistatic, rate-independent processes 
we follow  \cite{mielke-theil,mielke-theil-2,mielke-theil-levitas}
 and  define the so-called dissipation distance between to
states $q_1:=(y_1,m_2)\in\mathbb{Q}$ and
$q_2:=(y_2,m_2)\in\mathbb{Q}$  by introducing   $\mathcal{D}:\mathbb{Q}\times\mathbb{Q}\to[0;+\infty)$ as follows
\begin{align*}
\mathcal{D}(q_1,q_2):=\int_\O H_{\rm c}|m_1(y_1(x))-m_2(y_2(x))|\,\md x .
\end{align*}
 Here, the rationale is that although the system
dissipates via magnetic reorientation only, elastic deformation also contributes
to dissipation as $m$ lives in the deformed configuration. 
 
Assume, for simplicity, that the evolution of the specimen during a
process time interval $[0,T]$ is driven by  the  time-dependent 
loadings 
 \begin{align*}
f &\in C^1([0,T];L^q (\O;\R^3)), \\ g &\in C^1([0,T];L^q (\Gamma_t;\R^3)), \\ 
h&\in C^1([0,T];L^1(\Rz^3;\R^3)), 
\end{align*}
 so that we can write a (time-dependent) energy functional 
 $\mathcal{E}:[0,T]\times \mathbb{Q} \to (-\infty,\infty)$ 
as 
 \begin{align}\label{functional-timedep}
\mathcal{E}(t,q):= E(q) -
\Bigl(
\int_{\O^y}\!\!\! h(t) \cdot m + \int_\O f(t)\cdot u + \int_{\Gamma_t}g(t)
\cdot u
\Bigr)
.\end{align}

Our aim is to find  an energetic solution  corresponding to the
energy and dissipation functionals $(\mathcal{E},\mathcal{D})$ \cite{mielke-theil-2,mielke-theil-levitas}, that
is an everywhere defined  mapping $q:[0,T]\to\mathbb{Q}$
such that
\begin{subequations}\label{solution}
\begin{align}
&\forall\, t\in[0,T], \ \forall\, \tilde q\in\mathbb{Q}:\  \mathcal{ E }(t,q(t))\le \mathcal{ E }(t,\tilde q)+\Dcal(q(t),\tilde q) ,\label{stability}\\ 
&\forall\, t\in[0,T]:\,\mathcal{ E }(t,q(t))+{\rm Var}(\Dcal,q;0,t)=
\mathcal{ E }(0,q(0))+\int_0^t\partial_t\mathcal{ E
}(\theta,q(\theta))\,{\rm d}\theta,\label{e-equality}
\end{align}
\end{subequations}
 where we have used the notation    \begin{align*}
{\rm Var}(\Dcal,q;s,t):=\sup\sum_{i=1}^J\Dcal(q(t_{i-1}),q(t_i))
\end{align*}
the supremum  being taken  over all partitions of
$[s,t]$ in the form $\{s=t_0<t_1<...<t_{J-1}<t_J=t\}$.  Condition
\eqref{stability} is usually referred to as the (global) stability of
state $q$ at time $t$. For the sake of convenience we shall call {\it
  stable} (at time $t$) a state fulfilling \eqref{stability} and 
denote by $\mathbb{S}(t) \subset \mathbb{Q}$  the set of stable states.  The scalar
relation \eqref{e-equality} expresses the conservation of energy
instead. We shall now state the existence result.

\begin{theorem}[Existence of energetic solutions]
Let   $q_0 \in  \mathbb{S}(0)$.  Then, there  exist 
an energetic solution  corresponding  to $(\mathcal{ E
  },\mathcal{D})$, namely  a trajectory   
 $q:=(y,m):[0,T]\to\mathbb{Q}$ such that  $q(0)=q_0$  and
 \eqref{solution}  are  satisfied. Additionally,  $q$ is uniformly
 bounded in $\mathbb{Q}$ and   $m\circ y\in BV(0,T;L^1(\O;\R^3))$.
\end{theorem}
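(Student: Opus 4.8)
The plan is to apply the abstract existence theory for energetic solutions of rate-independent systems developed by {\sc Mielke} and coworkers \cite{Mielke05,mielke-theil-2,mielke-theil-levitas}. The standard route is the time-incremental minimization scheme: for a partition $0=t_0^N<t_1^N<\dots<t_{N}^N=T$ with vanishing fineness, set $q_0^N:=q_0$ and define inductively
\begin{align*}
q_k^N\in\mathop{\rm argmin}_{q\in\mathbb{Q}}\bigl(\mathcal{E}(t_k^N,q)+\mathcal{D}(q_{k-1}^N,q)\bigr).
\end{align*}
The existence of each minimizer follows exactly as in Theorem \ref{existence-static}: the functional $q\mapsto\mathcal{E}(t_k^N,q)+\mathcal{D}(q_{k-1}^N,q)$ is $\mathbb{Q}$-coercive and $\mathbb{Q}$-lower semicontinuous, so the direct method applies once we verify that $\mathcal{D}(q_{k-1}^N,\cdot)$ is $\mathbb{Q}$-lower semicontinuous. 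This last point is where Remark \ref{composition} is crucial: if $(y_k,m_k)$ $\mathbb{Q}$-converges to $(y,m)$, then $m_k\circ y_k\to m\circ y$ strongly in $L^2(\O;\R^3)$, which gives continuity (hence lower semicontinuity) of $\mathcal{D}(q_{k-1}^N,\cdot)$ along $\mathbb{Q}$-converging sequences.

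Next I would establish the \emph{a priori} estimates that make the scheme well-posed. The standard two-sided energy estimate, obtained by comparing $q_k^N$ with the competitor $q_{k-1}^N$ and summing the resulting inequalities, yields a uniform bound
\begin{align*}
\sup_k\Bigl(\mathcal{E}(t_k^N,q_k^N)+\sum_{j\le k}\mathcal{D}(q_{j-1}^N,q_j^N)\Bigr)\le C,
\end{align*}
independent of $N$. For this I must control $\partial_t\mathcal{E}(t,q)$ uniformly in $q$ on sublevels: since the only time dependence sits in the linear loading term, $\partial_t\mathcal{E}(t,q)=-\int_{\O^y}\dot h(t)\cdot m-\int_\O\dot f(t)\cdot u-\int_{\Gamma_t}\dot g(t)\cdot u$, which is bounded by the $C^1$-in-time regularity of $f,g,h$ together with the coercivity-based control of $\|u\|_{W^{1,p}}$ and $|\O^y|=|\O|$. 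The coercivity \eqref{growth} then forces the interpolants $q^N(\cdot)$ to stay $\mathbb{Q}$-bounded uniformly in $t$ and $N$, and the dissipation sums bound the total variation of $m\circ y$ uniformly, giving equiboundedness in $BV(0,T;L^1(\O;\R^3))$ of the composition via the dissipation distance's $L^1$ structure.

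The passage to the limit $N\to\infty$ proceeds by a Helly-type selection argument. Using the uniform $BV$ bound on $t\mapsto m^N\circ y^N$ one extracts, via a generalized Helly selection principle, a subsequence and a limit map $q(\cdot)$ such that at every $t\in[0,T]$ the piecewise-constant interpolant $q^N(t)$ $\mathbb{Q}$-converges to $q(t)$; the $\mathbb{Q}$-precompactness at each fixed time comes from Proposition \ref{q-comp} applied to the uniform $\mathbb{Q}$-bounds. One then verifies the two conditions of \eqref{solution} for the limit: \emph{stability} \eqref{stability} is recovered by a mutual-recovery-sequence argument, and \emph{energy balance} \eqref{e-equality} follows by taking limits in the discrete upper and lower energy estimates and invoking the lower semicontinuity of the dissipation functional together with a Riemann-sum convergence of $\int_0^t\partial_t\mathcal{E}$. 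The main obstacle I anticipate is the construction of the \textbf{mutual recovery sequence} needed to close the stability inequality in the limit: given the limit stable candidate and an arbitrary competitor $\tilde q$, one must build test states $\tilde q^N\to\tilde q$ along which $\mathcal{E}(t,q^N)-\mathcal{E}(t,\tilde q^N)+\mathcal{D}(q^N,\tilde q^N)$ passes correctly to the limit. The difficulty is genuinely magnetoelastic: since $m$ lives on the moving domain $\O^y$ and the dissipation couples $m$ with $y$ through the composition $m\circ y$, the recovery sequence cannot simply fix the magnetization but must transport it consistently along deformations whose images $\O^{y^N}$ vary; I would resolve this by working with the pulled-back magnetizations $m^N\circ y^N$ in the fixed reference configuration, where Remark \ref{composition} provides the strong $L^2$-convergence that makes the dissipation term continuous and lets the standard recovery-sequence construction go through.
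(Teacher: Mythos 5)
Your proposal follows essentially the same route as the paper's proof: time-incremental minimization whose solvability rests on Theorem~\ref{existence-static} together with the continuity of $\mathcal{D}$ supplied by Remark~\ref{composition}, uniform a priori bounds (including the $BV$ bound on $m^N\circ y^N$ coming from the dissipation sums), a generalized Helly selection principle, stability of the limit from lower semicontinuity of $\mathcal{E}$ and continuity of $\mathcal{D}$, and the energy balance obtained from the discrete upper estimate plus the converse inequality derived from stability as in \cite[Prop.~5.6]{Mielke05}. The only difference is presentational: the ``mutual recovery sequence'' obstacle you anticipate is resolved trivially in this setting---precisely because Remark~\ref{composition} makes $\mathcal{D}$ continuous along $\mathbb{Q}$-converging sequences, the constant competitor $\tilde q^N\equiv\tilde q$ suffices, which is exactly what the paper does and what your own resolution amounts to.
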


\begin{proof}[Sketch of the proof]
 This argument follows the by now classical argument for existence of
energetic solutions. As such, we record here some comment referring
for instance to
 \cite{francfort-mielke,Mielke05} for  the details.  Starting from
 the stable  initial condition $ q_0  \in \mathbb{S}(0)$ we
(semi)discretize the problem in time  by means of  
a partition $0=t_0<t_1<\ldots <t_N=T$ of $[0,T]$  such that the diameter  $\max_{i}(t_{i}-t_{i-1})\to 0$ as $N\to\infty$.   This gives us a sequence ${q_k^N}$  such that $q_0^N:=q_0$ and $q_k^N$, $1\le k\le N$, is a solution to the  following  minimization problem for $q\in\mathbb{Q}$
\begin{align}
  \mbox{ minimize } \mathcal{ E
    }(t_k,q)+\mathcal{D}(q,q^N_{k-1}). \label{incremental-min}
\end{align}
 The existence of a solution to \eqref{incremental-min} follows
form Theorem \ref{existence-static} combined with the lower
semicontinuity of $\mathcal{D}$. In particular,  Remark~\ref{composition}.
implies that the dissipation term in \eqref{incremental-min} is
continuous with respect to the weak convergence in $\mathbb{Q}$.
 We now record that minimality and the triangle inequality entail that the  obtained solutions are stable, i.e.,
$q^N_k\in\mathbb{S}(t_k)$ for all $k=0,\ldots,N$.  Let us define 
the right-continuous piecewise interpolant 
$q^N:[0,T]\to\mathbb{Q}$ as
\begin{align*}
  q^N(t):=\begin{cases}
    q^N_k &\mbox{ if $t\in  [t_{k-1},t_{k})$ if $k=1,\ldots,
      N$}, \\
    q^N_N &\mbox{ if $t=T$.}
  \end{cases}
\end{align*} 
Following \cite{Mielke05} we  can establish  for all $N\in\N$ the 
a-priori estimates
\begin{subequations}
  \begin{align}
    \|y^N\|_{L^\infty(0,T);W^{1,p}(\O;\R^3)}\le C ,\\
    \|\chi_{\O^{y^N}}\nabla m^N\|_{L^\infty((0,T);L^2(\R^3;\R^3))}\le C,\\
    \|\chi_{\O^{y^N}} m^N\|_{L^\infty((0,T);L^\infty(\R^3;\R^3))}\le C ,\\
    \|m^N\circ y^N\|_{BV(0,T; L^1(\O;\R^3))}\le C.
  \end{align}
\end{subequations}
 These 
a-priori estimates  together with a suitably  generalized 
version of 
Helly's selection principle \cite[Cor.~2.8]{mielke-theil-levitas} 
entail that, for some not relabeled subsequence, we have
$q^N \to q$ pointwise in $[0,T]$ with respect to the weak topology of
$\mathbb Q$. This convergence suffices in order to prove
that indeed the limit trajectory is stable, namely $q(t)\in \mathbb{Q}(t)$ for all $t\in
[0,T]$. Indeed, this follows from the lower semicontinuity of
$\mathcal{E}$ and the continuity of $\mathcal{D}$. 

Moreover, by exploiting minimality we readily get that 
\begin{align*}
 \mathcal{ E }(t_k,q^N_k)+\mathcal{D}(q^N_k,q^N_{k-1})-\mathcal{ E }(t_{k-1},q^N_{k-1})\le 
  \int_{t_{k-1}}^{t_k}\partial_t\mathcal{ E }(\theta,q_{k-1}^N)\,{\rm d}\theta\ .
\end{align*}
 Taking the sum of the latter on $k$ we readily check that the
one-sided inequality in relation \eqref{e-equality} holds for $t=T$.
The converse energy inequality (and hence \eqref{e-equality} for all $t\in[0,T]$) follows from the stability $q(t)\in
\mathbb{S}(t)$ of the limit trajectory  by \cite[Prop.
5.6]{Mielke05}.

 Note that   the previous existence result can be adapted to
the case of time-dependent  non-homogeneous  Dirichlet
boundary conditions  by following the corresponding argument
developed in \cite{francfort-mielke}. 
\end{proof}

 
\section*{Acknowledgment.} This work was initiated during a visit of MK and JZ
in the  IMATI CNR Pavia. The hospitality of the institute is gratefully
acknowledged. MK and JZ acknowledge the support by GA\v{C}R through the projects
P201/10/0357, P105/11/0411, and 13-18652S.

\end{document}